\newcommand{\bC}{{\mathbb C}}
\newcommand{\bN}{{\mathbb N}}
\newcommand{\bQ}{{\mathbb Q}}
\newcommand{\bZ}{{\mathbb Z}}
\newcommand{\dO}{{\mathcal O}}
\newcommand{\dP}{{\mathcal P}}
\DeclareSymbolFont{cyrletters}{OT2}{wncyr}{m}{n}
\DeclareMathSymbol{\Sha}{\mathalpha}{cyrletters}{"58}
\newcommand{\one}{\mathbf{1}}
\newcommand{\xyinj}{\ar@{^(->}}
\def\10{{\overrightarrow{10}}}
\def\01{{\overrightarrow{01}}}
\newcommand{\ep}{\varepsilon}
\newcommand{\ph}{\varphi}
\newtheorem{thm}{Theorem}[section]
\newtheorem{prop}[thm]{Proposition}
\newtheorem{lem}[thm]{Lemma}
\newtheorem{thmABC}{Theorem}
\theoremstyle{definition}
\theoremstyle{remark}
\newtheorem{rmk}[thm]{Remark}
\newtheorem{ex}[thm]{Example}
\numberwithin{equation}{section}
\let\origmaketitle\maketitle
\def\maketitle{
  \begingroup
  \def\uppercasenonmath##1{} 
  \origmaketitle
  \endgroup
}
\newcommand{\qZ}[1]{\dO_{\!-#1}}
\newcounter{step}[thm]
\begin{document}
\title[An algebraic approach to count the representations by $x^2+ay^2$]{An algebraic approach to count the number of representations of an integer by the quadratic form $x^2+ay^2$ for certain values of $a$}
\author{Thanathat Dechakulkamjorn}
\address[Thanathat Dechakulkamjorn]{Department of Mathematics and Computer Science, Chulalongkorn University, Bangkok, Thailand}
\email{ball09631@gmail.com}

\author{Nithi Rungtanapirom}
\address[Nithi Rungtanapirom]{Department of Mathematics and Computer Science, Chulalongkorn University, Bangkok, Thailand}
\email[corresponding author]{nithi.r@chula.ac.th}

\begin{abstract}
By considering the norm of elements in the ring of integers in $\bQ(\sqrt{-a})$, we give an algebraic approach to count the number of integral solutions of diophantine equations of the form $x^2+ay^2=n$ where $a$ is a Heegner number or $a=27$.
\end{abstract}

\subjclass[2010]{
11D09, 11D45, 11D72}
\keywords{diophantine equations, binary quadratic forms}

\maketitle

\tableofcontents


\section{Introduction}
The theory of representation of integers by binary quadratic forms has been studied for a long time. One of interesting problems is to count the number of representations of a fixed integer by a given binary quadratic form. Dirichlet's work \cite{Dirichlet1871} dealt with a variation of this problem, namely representations by the collection of reduced binary quadratic forms of a given discriminant. Based on this work, Hall \cite{newman} derived a formula for the case that each genus of binary quadratic forms of the given discriminant consists of exactly one reduced form. Further investigations on the number of representations by certain single binary quadratic forms are based on an analytic approach using Epstein zeta functions, theta series or Dirichlet series among the others. The latest investigations have been done by Kaplan and Williams \cite{Kaplan2004OnTN}; Sun and Williams \cite{Sun2006OnTN}; Berkovich and Yesilyurt \cite{berkovich-ramanujan}; Bagis and Glaser \cite{bagis-glasser}; etc.

This paper focuses on an algebraic approach for this problem in several cases. The key tool for our approach is the ring of integers $\qZ{a}$ in $\bQ(\sqrt{-a})$. Such an approach has been employed, for instance, in \cite{busenhart-halbeisen-hungerbuehler} to find an explicit formula for primitive solutions of the equation $x^2+y^2=n$ using Gaussian integers. More precisely, the expression $x^2+ay^2$ should be interpreted as the norm of $x+y\sqrt{-a}$, so that one may first consider instead counting the number of elements of $\qZ{a}$ of the given norm. As we shall see in Proposition \ref{prop:count-given-norm}, this works well if $a$ is a \textbf{Heegner number}, or equivalently, if $\qZ{a}$ is a unique factorization domain. It is known that a Heegner number is one of the following numbers:
\[
1, 2, 3, 7, 11, 19, 43, 67, 163,
\]
see \cite{stark-heegner}. The slight difficulty for $a\geq 3$ is $\qZ{a}$ also contains linear combinations of $1$ and $\sqrt{-a}$ with half-integer coefficients. Consequently, a more careful investigation is necessary. It turns out that the case $a=27$ can also be done by this approach. To summarize, we obtain the following results:

\begin{thmABC}
Let $n$ be a natural number. For each natural number $a$, let $X(n,a)$ denote the set of integral solutions to the equation $x^2+ay^2=n$.
\begin{enumerate}[label=\textup{(\Roman*)}]
\item \textup{(Theorem \ref{thm:Xan-1-2})} For $a=1,2$, we have
    \[
    |X(n,1)| = 4 \sum_{c\mid n} \left(\frac{-4}{c}\right) \quad \text{and} \quad |X(n,2)| = 2 \sum_{c\mid n} \left(\frac{-2}{c}\right).
    \]
\item \textup{(Theorem \ref{thm:Xan-3})} For $a=3$, we have the following results:
    \begin{enumerate}[label=\textup{(\alph*)}]
    \item If $n$ is even, then $\displaystyle |X(n,3)| = 6\displaystyle\sum_{c|n} \left ( \frac{c}{3} \right )$.
    \item If $n$ is odd, then $\displaystyle |X(n,3)| = 2\displaystyle\sum_{c|n} \left ( \frac{c}{3} \right )$.
    \end{enumerate}
\item \textup{(Theorem \ref{thm:Xan-7})} For $a=7$, we have the following results:
    \begin{enumerate}[label=\textup{(\alph*)}]
    \item If $4\mid n$, then $\displaystyle |X(n,7)| = 2\displaystyle\sum_{c|\frac{n}{4}} \left ( \frac{c}{7} \right )$.
    \item If $n$ is even but $4\nmid n$, then $X(n,7)=\emptyset$.
    \item If $n$ is odd, then $\displaystyle |X(n,7)| = 2\displaystyle\sum_{c|n} \left ( \frac{c}{7} \right )$.
    \end{enumerate}
\item \textup{(Theorem \ref{thm:Xan-11})} For every Heegner number $a\geq 11$, we have the following results:
    \begin{enumerate}[label=\textup{(\alph*)}]
    \item If $n$ is even, then $\displaystyle |X(n,a)| = |Y(n,a)|$.
    \item If $n$ is odd, then
    \[
    |X(n,a)| = \frac13\left[1+2\cdot\frac{(\tau(n_q)|3)}{\tau(n_q)}\right]\cdot|Y(n,a)|,
    \]
    where $n_q$ denotes the product of all prime factors (including multiplicity) of $n$ which are quadratic residues modulo $a$ but not expressible as $x^2+ay^2$ for any integers $x,y$.
    \end{enumerate}
\item \textup{(Theorem \ref{thm:Xan-27})} For $a=27$, we have the following results:
    \begin{enumerate}[label=\textup{(\alph*)}]
    \item If $3 \mid n$, then $X(a,27)\neq \emptyset$ only if $9 \mid n$. In this case, we have 
    \[
    |X(n,27)| = |X(\tfrac{n}{9},3)|.
    \]
    \item If $3\nmid n$ but $2\mid n$, then
    \[ |X(n,27)| = \frac13|X(n,3)|. \]
    \item If $\gcd(6,n)=1$, then
    \[
    |X(n,27)| = \frac13\left[1+2\cdot\frac{(\tau(n_q)|3)}{\tau(n_q)}\right]\cdot|X(n,3)|,
    \]
    where $n_q$ denotes the product of all prime factors $q$ (including multiplicity) of $n$ such that $q\equiv1\pmod{3}$ and $2$ is not a cubic residue modulo $q$.
    \end{enumerate}
\end{enumerate}
\end{thmABC}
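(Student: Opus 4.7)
The common setup identifies each $(x,y) \in X(n,a)$ with the element $x + y\sqrt{-a}$ of the sub-order $\bZ[\sqrt{-a}] \subseteq \qZ{a}$, so that $|X(n,a)|$ counts precisely the norm-$n$ elements of $\qZ{a}$ that lie in this sub-order, while Proposition~\ref{prop:count-given-norm} delivers $|Y(n,a)|$, the total number of norm-$n$ elements of $\qZ{a}$. The task in each part is therefore to determine the loss from $Y$ to $X$. For (I), $\qZ{a} = \bZ[\sqrt{-a}]$ when $a \in \{1,2\}$, so there is no loss and the formulas are immediate. For (II), $\qZ{3} = \bZ[\omega]$ with $\omega = (1+\sqrt{-3})/2$, and every element may be written as $(A + B\sqrt{-3})/2$ with $A \equiv B \pmod 2$; the observation that $A, B$ both odd forces $A^2 + 3B^2 \equiv 4 \pmod 8$, and hence $n$ odd, gives $|X| = |Y|$ immediately for $n$ even. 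For $n$ odd I would use the free action of $\qZ{3}^\times = \langle \omega \rangle \cong \bZ/6$ on norm-$n$ elements: a direct computation of $\omega \alpha$ and $\omega^2 \alpha$ shows that each orbit meets $\bZ[\sqrt{-3}]$ in exactly two elements, giving $|X| = |Y|/3$. For (III) the unit group degenerates to $\{\pm 1\}$, so I would instead exploit the splitting $2 = \pi \bar\pi$ with $\pi = (1+\sqrt{-7})/2$: one checks that $\alpha \in \bZ[\sqrt{-7}]$ iff the valuations $v_\pi(\alpha)$ and $v_{\bar\pi}(\alpha)$ are either both zero or both positive, and the three cases $v_2(n) \in \{0, 1, \geq 2\}$ recover (a)--(c) by counting admissible $2$-parts of ideals of norm $n$.

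For (IV), the parity argument from (II) transplants: every Heegner $a \geq 11$ is $\equiv 3 \pmod 8$, so odd-odd representations force $n$ odd, giving $|X| = |Y|$ for $n$ even. For $n$ odd, the key input is the conductor-order formula, which yields $h(\bZ[\sqrt{-a}]) = 3$ for each such $a$. Writing $R = \bZ[\sqrt{-a}]$, the quantity $|X(n, a)|$ equals twice the number of principal ideals of $R$ of norm $n$, and orthogonality of characters on $\Cl(R) \cong \bZ/3$ expresses this as $\tfrac{1}{3} \sum_\chi \sum_{N(I) = n} \chi(I)$. The trivial character reproduces $|Y(n, a)|/2$; the two non-trivial characters factor multiplicatively over primes, where good split primes (those representable as $x^2 + ay^2$) have trivial class and contribute the usual $e_p + 1$, while bad split primes contribute $((e_p + 1) \mid 3)$ via a geometric sum of cube roots of unity. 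Multiplicativity of the Legendre symbol modulo $3$ then collects the bad-prime factors into $(\tau(n_q) \mid 3)$, and the stated formula falls out.

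Part (V) rests on the injection $X(n, 27) \hookrightarrow X(n, 3)$, $(x, y) \mapsto (x, 3y)$, with image $\{(X, Y) \in X(n, 3) : 3 \mid Y\}$. For (a), reducing $X^2 + 3Y^2 \equiv 0 \pmod 3$ forces $3 \mid X$, and further imposing $3 \mid Y$ yields $9 \mid n$ and a clean bijection with $X(n/9, 3)$. For (b), since $n$ is even the parity condition $X \equiv Y \pmod 2$ ensures that $\omega$ preserves $\bZ[\sqrt{-3}]$-elements of norm $n$; computing the six $Y$-values modulo $3$ in each $\langle \omega \rangle$-orbit shows that exactly two vanish, giving the factor $1/3$. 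Case (c) is formally parallel to (IV)(b): $\bZ[\sqrt{-27}]$ has conductor $6$ in $\qZ{3}$ and (by the same conductor formula) class number $3$; the character-sum argument repeats verbatim, now with the bad primes characterised classically by the cubic-residue condition ($q \equiv 1 \pmod 3$ with $2$ not a cube modulo $q$) and with $|X(n, 3)|$ replacing $|Y(n, a)|$ on the right, since for $n$ coprime to $6$ the orders $\bZ[\sqrt{-3}]$ and $\bZ[\sqrt{-27}]$ both agree with $\qZ{3}$ at the relevant primes. The main obstacle will be the character-sum computations in (IV)(b) and (V)(c): in particular, verifying that the local factor at a bad split prime $p$ with $v_p(n) = e_p$ is exactly $((e_p + 1) \mid 3)$ --- which boils down to a cube-root-of-unity geometric series --- and invoking the classical cubic-residue description of the bad primes for $a = 27$.
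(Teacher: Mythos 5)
Your proposal is correct, and its skeleton --- embed $X(n,a)$ into $Y(n,a)$ via $(x,y)\mapsto x+y\sqrt{-a}$, compute $|Y(n,a)|$ by Proposition \ref{prop:count-given-norm}, and measure the defect --- is the paper's. Parts (I), (II), (V)(a), (V)(b) and all even-$n$ cases essentially coincide with the paper (which deduces the even-$n$ collapse from $2$ being inert rather than from your mod-$8$ congruence; the two are interchangeable). You genuinely diverge in (III), (IV)(b) and (V)(c). For $a=7$ the paper stays elementary: a parity computation on $N(b+c\lambda_7)=b^2-bc+2c^2$ shows $\ph_{n,7}$ is already bijective for odd $n$, and for $4\mid n$ it exhibits the explicit bijection $X(n,7)\to Y(\tfrac n4,7)$, $(x,y)\mapsto\tfrac{x+y}{2}+y\lambda_7$; your valuation criterion ($\alpha\in\bZ[\sqrt{-7}]$ iff $v_\pi(\alpha)$ and $v_{\bar\pi}(\alpha)$ are both zero or both positive) is correct --- it is exactly the condition that the two components of $\alpha$ in $\qZ{7}/2\qZ{7}\cong\bF_2\times\bF_2$ agree --- and yields the same local count $e-1$ versus $e+1$ at $2$. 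For (IV)(b) and (V)(c) the paper never invokes class groups of orders: it factors each $z\in Y(n,a)$ into primes of the UFD $\qZ{a}$, reduces that factorization modulo $2$ (and, for $a=27$, normalizes by primary primes and reduces modulo $3$ to pin down the unit), and counts admissible exponent tuples by a roots-of-unity filter applied to $\prod_j(x^{v_j}+\cdots+x^{2v_j})$ modulo $x^3-1$. Your route through $\Cl(\bZ[\sqrt{-a}])\cong\bZ/3\bZ$, resp.\ $\Cl(\bZ[\sqrt{-27}])\cong\bZ/3\bZ$, and character orthogonality is a faithful repackaging of the same combinatorics: the local factor $\sum_{j=0}^{e}\zeta^{2j-e}$ at a bad split prime does equal $((e+1)|3)$ as you claim, which is precisely the identity $G_v(\omega)=(\tfrac{v+1}{3})$ the paper extracts. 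What your version buys is conceptual clarity and a direct bridge to Cox's theory; what it costs is having to justify the element--ideal dictionary for the non-maximal orders (the class number formula for orders, the bijection on ideals prime to the conductor, and ``$p$ representable iff $\fp$ is principal in the order'', which is the paper's unlabelled lemma preceding Theorem \ref{thm:Xan-11} in disguise) --- none of which the paper's element-level argument needs.
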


This paper is organized as follows: Section \ref{sec:count-norm} deals with a related problem, namely counting the elements of given norm. The problem of our interest will be discussed in Section \ref{sec:count-repr} except the cases $a=3,27$. These two cases will be discussed in Section \ref{sec:eisenstein}, where the ring of Eisenstein integers with primitive third roots of unity is involved.

\smallskip

\paragraph{Notation}
The ring of integers in $\bQ(\sqrt{-a})$, where $a$ is a square-free natural number, will be denoted by
\[
\qZ{a} = \begin{cases} \bZ[\sqrt{-a}] & \text{if $a\equiv 1,2\pmod{4}$,}\\ \bZ[\frac{-1+\sqrt{-a}}2] & \text{if $a\equiv 3\pmod{4}$.} \end{cases}
\]
We will also write $\lambda_a:=\frac{-1+\sqrt{-a}}{2}\in\qZ{a}$ if $a\equiv3\bmod{4}$. The conjugate of $z=x+y\sqrt{-a}\in\bQ(\sqrt{-a})$, where $x,y\in\bQ$, will be denoted by $\overline{z}:=x-y\sqrt{-a}$. The norm of $z$ will be denoted by $N(z):=z\overline{z}=x^2+ay^2$.

In order to count the number of representations, we introduce the following sets for natural numbers $a$ and $n$:
\begin{align*}
X(n,a) &:= \{(x,y)\in\bZ\times\bZ \ | \ x^2+ay^2=n\} \quad \text{and} \\
Y(n,a) &:= \left\{ z\in \qZ{a} \ \big| \ N(z)=n \right\}.
\end{align*}
Both sets are connected by the map
\[
\ph_{n,a}:X(n,a)\mapsto Y(n,a), \ (x,y)\mapsto x+y\sqrt{-a}
\]
as to be discussed in Sections \ref{sec:count-repr} and \ref{sec:eisenstein}. Finally, $\tau(n)$ denotes as usual the number of positive divisors of $n$.

\section{Counting the elements of given norm} \label{sec:count-norm}
In order to count the number of elements of $Y(n,a)$, recall the Kronecker symbol $\left(\frac{a}{n}\right)$ or $(a|n)$, which is a generalization of the Legendre symbol. It is multiplicative in the upper and lower arguments. Note that if $d$ is a square-free integer and $D$ is the discriminant of $\bQ(\sqrt{d})/\bQ$, the following holds for every prime number $p$:
\[
\left( \frac{D}{p} \right) = \begin{cases} 1 & \text{if $p$ splits in $\bQ(\sqrt{d})/\bQ$,} \\ 0 & \text{if $p$ is ramified in $\bQ(\sqrt{d})/\bQ$,} \\ -1 & \text{if $p$ is inert in $\bQ(\sqrt{d})/\bQ$.}\end{cases}
\]

\begin{prop} \label{prop:count-given-norm}
Let $a$ be a Heegner number. Furthermore, let $D$ be the discriminant of $\bQ(\sqrt{-a})/\bQ$. The following formula holds for all natural numbers $n$:
\begin{equation} \label{eqn:Yan-general}
|Y(n,a)| = |\qZ{a}^\times|\cdot \sum_{c\mid n} \left(\frac{D}{c}\right).
\end{equation}
In particular, if $a\equiv3\pmod4$, then
\begin{equation} \label{eqn:Yan-3mod4}
|Y(n,a)| = |\qZ{a}^\times|\cdot \sum_{c\mid n} \left(\frac{c}{a}\right).
\end{equation}
\end{prop}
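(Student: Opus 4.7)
The plan is to verify the formula by showing both sides are multiplicative in $n$ and then checking that they agree on prime powers. The key input on the left-hand side is that since $a$ is a Heegner number, $\qZ{a}$ is a PID, hence a UFD, so every nonzero element admits a unique prime factorization up to units.

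First I would establish multiplicativity of $c \mapsto |Y(n,a)|/|\qZ{a}^\times|$. Given coprime natural numbers $m,n$, an element of norm $mn$ factors as $z = z_1 z_2$ with $N(z_1)=m$, $N(z_2)=n$, essentially uniquely: the primes of $\qZ{a}$ lying above rational primes dividing $m$ contribute to $z_1$ and those above primes dividing $n$ contribute to $z_2$. Grouping associates together yields a bijection between orbits under $\qZ{a}^\times$, so $|Y(mn,a)| = |Y(m,a)|\cdot|Y(n,a)|/|\qZ{a}^\times|$. The sum of Kronecker symbols $n \mapsto \sum_{c\mid n}\left(\frac{D}{c}\right)$ is multiplicative by the classical fact that the divisor sum of a (completely) multiplicative function is multiplicative.

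Next I would evaluate both sides at a prime power $p^k$. The right-hand side is $\sum_{i=0}^k\left(\frac{D}{p}\right)^i$, which equals $k+1$ if $p$ splits in $\qZ{a}$, equals $1$ if $p$ ramifies, and equals $1$ or $0$ (according to the parity of $k$) if $p$ is inert. For the left-hand side, I would count classes of elements of norm $p^k$ modulo units: if $p = \pi\overline{\pi}$ with $\pi,\overline{\pi}$ non-associate, the possibilities are $\pi^i\overline{\pi}^{k-i}$ for $0\leq i\leq k$ (giving $k+1$); if $(p)=(\pi)^2$ is ramified, only $\pi^k$ up to units works (giving $1$); if $p$ remains prime in $\qZ{a}$ with norm $p^2$, then $p^k$ is a norm iff $k$ is even, yielding one class $p^{k/2}$ or none. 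In each case the counts match, which proves \eqref{eqn:Yan-general}.

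For the second assertion I would use that $D = -a$ when $a \equiv 3\pmod 4$ and verify $\left(\frac{-a}{c}\right) = \left(\frac{c}{a}\right)$ for every positive integer $c$. Since both sides are multiplicative in $c$, it suffices to check this for $c=2$ and for odd primes $c=p$. For odd $p$, quadratic reciprocity together with $\left(\frac{-1}{p}\right)=(-1)^{(p-1)/2}$ gives
\[
\left(\frac{-a}{p}\right) = (-1)^{(p-1)/2}\left(\frac{a}{p}\right) = (-1)^{(p-1)/2}(-1)^{\frac{a-1}{2}\cdot\frac{p-1}{2}}\left(\frac{p}{a}\right),
\]
and the sign is $+1$ because $\frac{a-1}{2}$ is odd. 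For $c=2$ one compares $\left(\frac{-a}{2}\right)$ (determined by $-a\bmod 8$) with $\left(\frac{2}{a}\right)=(-1)^{(a^2-1)/8}$ in the two subcases $a\equiv3,7\pmod 8$; both match. Substituting into \eqref{eqn:Yan-general} yields \eqref{eqn:Yan-3mod4}.

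No step is a real obstacle: the heart of the argument is the case analysis for prime splitting, which is routine once unique factorization is in hand. The only care-demanding point is the Kronecker symbol manipulation at $c=2$ in the last step, where one must be mindful of the convention for $\left(\frac{m}{2}\right)$.
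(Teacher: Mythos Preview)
Your proposal is correct and follows essentially the same route as the paper: both reduce to showing that $n\mapsto|Y(n,a)|/|\qZ{a}^\times|$ is multiplicative via unique factorization in $\qZ{a}$, and then verify the formula on prime powers by the three-way case split on how $p$ decomposes in $\qZ{a}$. For the deduction of \eqref{eqn:Yan-3mod4} the paper simply invokes the reciprocity law for the Kronecker symbol in one line, whereas you spell out the verification at $c=2$ and at odd primes by hand; this is the same content written out in slightly more detail.
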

Note that $|\qZ{a}^\times|=2$ for all square-free natural numbers $a$ except $a=1$, where $\qZ{1}^\times$ is the group of the fourth roots of unity, and $a=3$, where $\qZ{3}^\times$ is the group of the sixth roots of unity.
\begin{proof}
For simplicity, write $u:=|\qZ{a}^\times|$. Consider the function
\[
f : \mathbb{N} \rightarrow \mathbb{Q}, \ n\mapsto f(n) := \frac{1}{u} |Y(n,a)|.
\]
We want to show that $f$ is multiplicative. It is evident that $f(1)=1$. Now let $m,n \in \mathbb{N}$ be such that $\gcd(m,n) = 1$. Consider the mapping
\[
h:Y(m,a)\times Y(n,a) \to Y(mn,a), \ (z_1,z_2)\mapsto z_1z_2.
\]
This is well-defined since the norm on $\qZ{a}$ is multiplicative. Furthermore, the uniqueness of factorization in $\qZ{a}$ implies that each $z\in Y(mn,a)$ can be factored as product of two elements of norms $m$ and $n$ uniquely up to association. Hence $h$ is a $u$-to-one mapping, implying that
\[
|Y(m,a)|\cdot|Y(n,a)| = u\cdot|Y(mn,a)|.
\]
This proves the multiplicativity of $f$. Now we compute $|Y(a,p^k)|$ for each prime number $p$ and $k \in \mathbb{N}$ as follows:

\begin{enumerate}[leftmargin=4ex,labelwidth=8ex,label=\textsc{Case} \arabic*:,itemindent=6ex,itemsep=1ex]
\item $(D|p)=0$.\\
In this case, there is exactly one prime element $\pi\in\qZ{a}$ of norm $p$ up to association. This implies that every element in $\qZ{a}$ of norm $p^k$ is an associate of $\pi^k$. This implies that $|Y(a,p^k)|=u$.
\item $(D|p)=1$.\\
In this case, there are exactly two prime elements $\pi_1,\pi_2\in\qZ{a}$ of norm $p$ up to association. This implies that every element in $\qZ{a}$ of norm $p^k$ is an associate of $\pi_1^j\pi_2^{k-j}$ for some $j\in\{0,1,\ldots,k\}$. This implies that $|Y(a,p^k)|=u(k+1)$.
\item $(D|p)=-1$.\\
In this case, $p$ remains prime in $\qZ{a}$ and has norm $p^2$. Hence there exists an element of norm $p^k$ if and only if $k$ is even. In this case, such an element is an associate of $p^{k/2}$. This implies that $|Y(a,p^k)|=u$ if $k$ is even and $|Y(a,p^k)|=0$ if $k$ is odd.
\end{enumerate}

From all the three cases, we have
\[
f(p^k) = \sum_{j=0}^k \left(\frac{D}{p}\right)^j = \left[\one*\left(\frac{D}{\cdot}\right)\right](p^k),
\]
where $*$ denotes the Dirichlet convolution of two arithmetic functions. Hence \eqref{eqn:Yan-general} follows from the multiplicativity of both $f$ and the convolution $\one*\left(\frac{D}{\cdot}\right)$. The formula \eqref{eqn:Yan-3mod4} then follows from the reciprocity law for the Kronecker symbol.
\end{proof}

\section{Counting the number of representations} \label{sec:count-repr}
We now count the number of elements of
\[
X(n,a) = \{(x,y)\in\bZ\times\bZ \ | \ x^2+ay^2=n\}.
\]
An important ingredient is to consider the mapping
\begin{equation} \label{eqn:ph-a-n}
    \ph_{n,a}:X(n,a)\mapsto Y(n,a), \ (x,y)\mapsto x+y\sqrt{-a}.
\end{equation}
It is easily seen that $\ph_{n,a}$ is well-defined and injective. The surjectivity holds if every element of $Y(n,a)$ is of the form $b+c\sqrt{-a}$ for some $b,c\in\bZ$ (i.e.~$b$ and $c$ are not half of odd integers). This is particularly the case if $\qZ{a}=\bZ[\sqrt{-a}]$, i.e.~$a\equiv1,2\pmod{4}$. The only such Heegner numbers are $1$ and $2$. Hence we get the following result:
\begin{thm} \label{thm:Xan-1-2}
For all natural numbers $n$, we have
\[
|X(n,1)| = 4 \sum_{c\mid n} \left(\frac{-4}{c}\right) \quad \text{and} \quad |X(n,2)| = 2 \sum_{c\mid n} \left(\frac{-2}{c}\right).
\]
\end{thm}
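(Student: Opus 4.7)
The plan is to derive Theorem \ref{thm:Xan-1-2} directly from Proposition \ref{prop:count-given-norm} via the map $\ph_{n,a}$ introduced in \eqref{eqn:ph-a-n}. For $a\in\{1,2\}$ we have $a\equiv 1,2\pmod{4}$, hence $\qZ{a}=\bZ[\sqrt{-a}]$, so every element of $Y(n,a)$ is automatically of the form $b+c\sqrt{-a}$ with $b,c\in\bZ$. Combined with the injectivity and well-definedness of $\ph_{n,a}$ already noted just before the theorem, this makes $\ph_{n,a}$ a bijection, and hence $|X(n,a)|=|Y(n,a)|$.

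For $a=1$, the discriminant of $\bQ(i)/\bQ$ is $D=-4$ and $|\qZ{1}^\times|=4$, so Proposition \ref{prop:count-given-norm} delivers the claimed formula for $|X(n,1)|$ immediately.

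For $a=2$, the discriminant of $\bQ(\sqrt{-2})/\bQ$ is $D=-8$ and $|\qZ{2}^\times|=2$, which gives $|X(n,2)|=2\sum_{c\mid n}\left(\frac{-8}{c}\right)$. To match the form stated in the theorem, the only remaining task is to verify the identity $\left(\frac{-8}{c}\right)=\left(\frac{-2}{c}\right)$ of Kronecker symbols for every positive integer $c$. This follows from multiplicativity in the upper argument, writing $\left(\frac{-8}{c}\right)=\left(\frac{-2}{c}\right)\left(\frac{2}{c}\right)^2$: when $c$ is odd, the extra factor $\left(\frac{2}{c}\right)^2$ equals $1$, and when $c$ is even both sides vanish, since $\left(\frac{-2}{2}\right)=\left(\frac{2}{2}\right)=0$ propagates through the Kronecker symbol.

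I do not expect any substantive obstacle: the statement is essentially a direct application of Proposition \ref{prop:count-given-norm} plus the above small identification of Kronecker symbols. The only conceptual point is observing that $a=1,2$ are the two Heegner numbers for which $\qZ{a}=\bZ[\sqrt{-a}]$, so that the easy half of the argument (surjectivity of $\ph_{n,a}$) is automatic and no analysis of half-integer coefficients is required — a luxury which will disappear in the subsequent cases $a\geq 3$.
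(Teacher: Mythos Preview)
Your proposal is correct and follows essentially the same route as the paper: the bijection $\ph_{n,a}$ for $a\in\{1,2\}$ plus Proposition \ref{prop:count-given-norm}, followed by the Kronecker-symbol identity $\left(\frac{-8}{c}\right)=\left(\frac{-2}{c}\right)$. The only cosmetic difference is that the paper factors $-8=(-2)^3$ and uses $(-2|c)^3=(-2|c)$, whereas you factor $-8=(-2)\cdot 2^2$ and handle odd and even $c$ separately; both are equally valid.
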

\begin{proof}
This follows from the observation above together with Proposition \ref{prop:count-given-norm} and the fact that $(-8|\delta)=(-2|\delta)^3=(-2|\delta)$ for every $\delta\in\bZ$.
\end{proof}

If $a$ is a Heegner number such that $a\geq 3$, then necessarily $a\equiv3\pmod{4}$, which implies that $\qZ{a}=\bZ[\lambda_a]$, where $\lambda_a := \frac{-1+\sqrt{-a}}{2}$. Consequently, the map $\ph_{n,a}$ defined in \eqref{eqn:ph-a-n} may not be surjective. The case $a=3$ leads to the ring of Eisenstein integers, which contains primitive third roots of unity. Hence this case needs to be discussed separately and will be postponed to Section \ref{sec:eisenstein}. Instead, we will discuss first the cases $a=7$, where $2$ splits completely in $\qZ{a}$, and $a\geq 11$, where $2$ remains prime in $\qZ{a}$.

\subsection{The case \texorpdfstring{$a=7$}{a=7}}
The number of representations by the quadratic form $x^2+7y^2$ can be counted as follows:
\begin{thm} \label{thm:Xan-7}
Let $n$ be a natural number.
\begin{enumerate}[label=\textup{(\alph*)}]
\item \label{item:a7-n4} If $4\mid n$, then $\displaystyle |X(n,7)| = 2\displaystyle\sum_{c|\frac{n}{4}} \left ( \frac{c}{7} \right )$.
\item \label{item:a7-n2} If $n$ is even but $4\nmid n$, then $X(n,7)=\emptyset$.
\item \label{item:a7-n1} If $n$ is odd, then $\displaystyle |X(n,7)| = 2\displaystyle\sum_{c|n} \left ( \frac{c}{7} \right )$.
\end{enumerate}
\end{thm}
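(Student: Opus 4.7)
The overall strategy is to exploit the injective map $\ph_{n,7}:X(n,7)\to Y(n,7)$ from \eqref{eqn:ph-a-n}, whose image is precisely $Y(n,7)\cap\bZ[\sqrt{-7}]$, i.e., those $z=\frac{a+b\sqrt{-7}}{2}\in\qZ{7}$ with $a,b$ both even. Proposition \ref{prop:count-given-norm} already gives $|Y(n,7)|=2\sum_{c\mid n}(c/7)$, so the proof consists of determining, case by case, when $\ph_{n,7}$ is bijective, and when it is not, relating its image to some $Y(m,7)$.

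For part \ref{item:a7-n1}, I would show $\ph_{n,7}$ is surjective whenever $n$ is odd. If $z=\frac{a+b\sqrt{-7}}{2}\in\qZ{7}$ with $a,b$ both odd, then $a^2\equiv b^2\equiv 1\pmod 8$, so $a^2+7b^2\equiv 0\pmod 8$ and $N(z)=\frac{a^2+7b^2}{4}$ is even. Hence for odd $n$ every element of $Y(n,7)$ already lies in $\bZ[\sqrt{-7}]$, giving $|X(n,7)|=|Y(n,7)|$. For part \ref{item:a7-n2}, a direct mod-$4$ computation using $x^2,y^2\in\{0,1\}\pmod 4$ and $7\equiv -1\pmod 4$ shows that $x^2+7y^2\pmod 4\in\{0,1,3\}$, never $2$.

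Part \ref{item:a7-n4} is the main obstacle. Using the splitting $2=\lambda_7\overline{\lambda_7}$ (with $N(\lambda_7)=2$), I would construct a bijection
\[
\psi:Y(n/4,7)\longrightarrow X(n,7),\qquad z'\longmapsto \ph_{n,7}^{-1}(2z').
\]
Well-definedness and injectivity are easy: for $z'=c+d\lambda_7\in\qZ{7}$ one has $2z'=(2c-d)+d\sqrt{-7}\in\bZ[\sqrt{-7}]$, and multiplicativity of the norm gives $N(2z')=4N(z')=n$. Surjectivity amounts to showing that any $z=x+y\sqrt{-7}\in\bZ[\sqrt{-7}]$ with $N(z)=n$ (where $4\mid n$) is divisible by $2$ in $\qZ{7}$. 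The mod-$4$ analysis from part \ref{item:a7-n2} forces $x$ and $y$ to share parity, and in either subcase $z/2=\frac{x+y\sqrt{-7}}{2}$ lies in $\qZ{7}$ with norm $n/4$. Combining $\psi$ with Proposition \ref{prop:count-given-norm} applied to $n/4$ then yields the stated formula.
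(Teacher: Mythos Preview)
Your proof is correct and follows essentially the same approach as the paper: the same bijectivity of $\ph_{n,7}$ for odd $n$, the same parity obstruction for $n\equiv 2\pmod 4$, and the same bijection $X(n,7)\leftrightarrow Y(n/4,7)$ for $4\mid n$ (your $\psi$ is precisely the inverse of the paper's map $(x,y)\mapsto\tfrac{x+y}{2}+y\lambda_7=\tfrac{x+y\sqrt{-7}}{2}$). The only cosmetic difference is that you compute in the coordinates $\tfrac{a+b\sqrt{-7}}{2}$ (using a mod-$8$ argument for part~\ref{item:a7-n1}), whereas the paper works in the $\lambda_7$-basis (computing $N(a+b\lambda_7)=a^2-ab+2b^2$ modulo $2$).
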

\begin{proof}
We first treat the case $n$ is even. Observe that for any $(x,y)\in X(7,n)$, we have $x\equiv y\pmod{2}$. Consequently, if $X(7,n)$ contains an element $(x,y)$, then $n=x^2+7y^2\equiv0\pmod{4}$. This proves \ref{item:a7-n2}. In order to prove \ref{item:a7-n4}, observe that if $4\mid n$, there is a bijection
\[
X(n,7)\to Y(\tfrac{n}{4},7), \ (x,y) \mapsto \tfrac{x+y}{2}+y\lambda_7.
\]
This together with Proposition \ref{prop:count-given-norm} proves \ref{item:a7-n4}.

Now we come to the case $n$ is odd and claim that the map $\ph_{n,7}$ from \eqref{eqn:ph-a-n} is bijective. To see the surjectivity, observe that if $z=a+b\lambda_7\in Y(7,n)$, then
\[
n = N(z) = a^2-ab+2b^2 \equiv a(a-b)\pmod{2}.
\]
This implies that $a$ and $ab$ are odd. Hence $b$ is even, i.e.~$b=2y$ for some $y\in \bZ$. Consequently, $z=(a-y)+y\sqrt{-7}=\ph(a-y,y)$. This together with Proposition \ref{prop:count-given-norm} proves \ref{item:a7-n1}.
\end{proof}

\subsection{The case \texorpdfstring{$a\geq11$}{a>=11}}
For the remaining case, we have $\qZ{a}^\times=\{\pm1\}$ and $2$ remains prime in $\qZ{a}$. Hence $\qZ{a}/(2)$ is a field with four residue classes represented by $0,1,\lambda_a$ and $\lambda_a^2\equiv 1+\lambda_a\bmod2$. Consequently, the elements of $X(n,a)$ will be enumerated differently from the case $a=7$. We begin with the following observation:
\begin{lem}
Let $a$ be a Heegner number such that $a\geq 11$ and $p$ be a prime number which splits completely in $\qZ{a}$. Then $p$ can be written as $x^2+ay^2$ for some $x,y\in\bZ$ if and only if its prime factors in $\qZ{a}$ are congruent to $1$ modulo $2$.
\end{lem}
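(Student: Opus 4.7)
The plan is to translate the claim ``$p$ is of the form $x^2+ay^2$'' into a condition on the residue class modulo $2$ of a prime factor of $p$ in $\qZ{a}$, and exploit that $\qZ{a}/(2)$ is a field with four elements.

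First I would record the elementary identification: since $\sqrt{-a}=1+2\lambda_a$, an element $z\in\qZ{a}$ written as $z=m+n\lambda_a$ with $m,n\in\bZ$ equals $x+y\sqrt{-a}$ for integers $x,y$ (namely $y=n/2$ and $x=m-y$) if and only if $n$ is even. In terms of the residue field $\qZ{a}/(2)=\{0,1,\lambda_a,1+\lambda_a\}$, this means exactly that $z\equiv 0$ or $z\equiv 1\pmod{2}$. Next I would check that the condition ``$\pi\equiv 1\pmod{2}$'' is independent of the chosen associate: the only units are $\pm1$, and $-1\equiv 1\pmod 2$, so multiplication by units preserves the residue class. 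Hence it makes sense to speak of the residue class modulo $2$ of ``the'' prime factor of $p$ in $\qZ{a}$.

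Now I would prove both implications. For the forward direction, suppose $p=x^2+ay^2$ with $x,y\in\bZ$. Then $\pi:=x+y\sqrt{-a}\in\qZ{a}$ satisfies $N(\pi)=p$, so $\pi$ is a prime factor of $p$ in $\qZ{a}$ (it is irreducible because its norm is prime, and $\qZ{a}$ is a UFD). By the characterization above, $\pi\equiv 0$ or $1\pmod{2}$; but $p=N(\pi)$ is odd (since $p$ splits, it cannot be the ramified prime $2$, and $p$ is odd by hypothesis on $a\geq 11$), so $\pi\not\equiv 0\pmod 2$, whence $\pi\equiv 1\pmod 2$. The conjugate $\bar\pi=x-y\sqrt{-a}$ is an associate of the other prime factor and is likewise $\equiv 1\pmod 2$. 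For the backward direction, if $p=\pi\bar\pi$ with $\pi\equiv 1\pmod{2}$, then by the identification in the first paragraph $\pi=x+y\sqrt{-a}$ for some $x,y\in\bZ$, so $p=N(\pi)=x^2+ay^2$.

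There is no real obstacle here; the argument is essentially bookkeeping once one notes that $\sqrt{-a}\equiv 1\pmod{2\qZ{a}}$, which reduces the ``integer-coefficients'' condition on the basis $\{1,\sqrt{-a}\}$ to a residue-class condition modulo $2$ on the basis $\{1,\lambda_a\}$. The only mild subtlety is justifying that the residue class of a prime factor is well-defined on the associate class, which is handled by noting $\qZ{a}^\times=\{\pm1\}$ for $a\geq 11$.
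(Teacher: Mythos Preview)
Your proof is correct and follows essentially the same approach as the paper: both reduce the question to whether the $\lambda_a$-coefficient of a prime factor of $p$ is even, which is exactly the condition $\pi\equiv 1\pmod{2}$ once one rules out $\pi\equiv 0$ using that $p$ is odd. Your write-up is in fact a bit more careful than the paper's (you explicitly check that the residue class is well-defined on associate classes via $\qZ{a}^\times=\{\pm1\}$); the only minor slip is that for Heegner $a\geq 11$ the prime $2$ is inert rather than ramified, but this does not affect the argument since either way $p\neq 2$.
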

\begin{proof}
Let $\pi=b+c\lambda_a$ be a prime factor of $p$. Then $p=\pi\overline{\pi}$ with $\overline{\pi}=b+c\overline{\lambda_a} = (b-c)-c\lambda_a$. If both $\pi$ and $\overline{\pi}$ are congruent to $1$ modulo $2$, then $2\mid c$, say $c=2v$ for some $v\in\bZ$, implying that $p=N(b+2v\lambda_a)=(b-v)^2+av^2$. Conversely, if $p=x^2+ay^2=(x+y\sqrt{-a})(x-y\sqrt{-a})$ for some $x,y\in\bZ$, then $b+c\lambda_a=\pm x\pm y\sqrt{-a} = \pm x\pm(y+2y\lambda_a)$, implying that $c=\pm2y$, i.e. $\pi,\overline{\pi}\equiv 1\pmod{2}$ as desired.
\end{proof}
\begin{ex}
Consider $a=11$ and $p=5$. Since $5$ is a quadratic residue modulo $11$, it follows that $5$ splits completely in $\qZ{11}$. In fact, $5=(2+\lambda_{11})(2+
\overline{\lambda_{11}})$. Its prime factors in $\qZ{11}$, namely $2+\lambda_{11}$ and $2+\overline{\lambda_{11}} = 1-\lambda_{11}$ are both not congruent to $1$ modulo $2$. This corresponds to the fact that $5$ is not expressible as $x^2+11y^2$ for any $x,y\in\bZ$ as can be easily seen.
\end{ex}
\begin{lem} \label{lem:x2ay2even-a3mod8}
Let $a$ be a square-free natural number congruent to $3$ modulo $8$ and $n$ be an even natural number. Then $\#X(n,a)=\#Y(n,a)$.
\end{lem}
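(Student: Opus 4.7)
The plan is to establish the bijectivity of the injection $\ph_{n,a}:X(n,a)\to Y(n,a)$ from \eqref{eqn:ph-a-n} under the stated hypotheses. Since $\ph_{n,a}$ is always injective, only surjectivity needs to be addressed.

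First, I would write an arbitrary $z\in Y(n,a)$ as $z=b+c\lambda_a$ with $b,c\in\bZ$, and observe that $z$ lies in the image of $\ph_{n,a}$ precisely when $c$ is even. Indeed,
\[
z = \bigl(b-\tfrac{c}{2}\bigr) + \tfrac{c}{2}\sqrt{-a},
\]
and both coordinates lie in $\bZ$ if and only if $2\mid c$. Thus the task reduces to showing that $n$ even forces $c$ to be even.

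Next, using $\lambda_a+\overline{\lambda_a}=-1$ and $\lambda_a\overline{\lambda_a}=\tfrac{a+1}{4}$, I would compute
\[
N(b+c\lambda_a) = b^2 - bc + \tfrac{a+1}{4}\,c^2.
\]
The hypothesis $a\equiv 3\pmod{8}$ is precisely what makes $\tfrac{a+1}{4}$ an odd integer, so reducing modulo $2$ gives
\[
N(z) \equiv b^2 + bc + c^2 \pmod{2}.
\]
A quick case check on the four parity classes of $(b,c)$ shows that the form $b^2+bc+c^2$ vanishes modulo $2$ only when $b\equiv c\equiv 0\pmod{2}$, while every other parity combination evaluates to $1$. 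Hence, if $N(z)=n$ is even, then $c$ is even, so $z$ lies in the image of $\ph_{n,a}$.

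There is no substantial obstacle here: the argument is a short parity computation. Its only subtle point is recognising that the congruence $a\equiv 3\pmod{8}$ is exactly what is needed to make $\tfrac{a+1}{4}$ odd, so that the norm form reduces modulo $2$ to the non-degenerate pattern $b^2+bc+c^2$ rather than to an expression (such as $b^2+bc$) that would vanish on inputs with $c$ odd.
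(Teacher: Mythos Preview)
Your proof is correct. Both you and the paper establish surjectivity of $\ph_{n,a}$, but by slightly different means: the paper argues that $2$ remains prime in $\qZ{a}$ when $a\equiv3\pmod8$, so $2\mid N(z)=z\overline{z}$ forces $2\mid z$, whence $z=2(b+c\lambda_a)=\ph_{n,a}(2b-c,c)$. You instead compute the norm form $N(b+c\lambda_a)=b^2-bc+\tfrac{a+1}{4}c^2$ directly and check parities, using $a\equiv3\pmod8$ only to make $\tfrac{a+1}{4}$ odd. Your route is more elementary and self-contained (it does not invoke the splitting behaviour of $2$), while the paper's argument is shorter once one knows $2$ is inert; in effect your parity table \emph{is} a proof that $\qZ{a}/(2)$ is a field, so the two arguments are equivalent at bottom. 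One small remark: your case analysis actually yields the stronger conclusion that both $b$ and $c$ are even (i.e.\ $2\mid z$), matching the paper's conclusion exactly, even though you only need $c$ even for the image criterion you set up.
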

\begin{proof}
Consider the map $\ph_{n,a}$ defined in \eqref{eqn:ph-a-n}. It is easy to see that $\ph_{n,a}$ is well-defined and injective. To see the surjectivity, let $z\in Y(n,a)$. Then $2\mid n=N(z)=z\overline{z}$. Since $2$ remains prime in $\qZ{a}$ (note that this does not require the uniqueness of factorization in $\qZ{a}$), it follows that $2\mid z$ or $2\mid\overline{z}$, but the latter case also implies that $2\mid z$. Hence there are $b,c\in\bZ$ such that $z=2(b+c\lambda_a)=\ph_{n,a}(2b-c,c)$ as desired.
\end{proof}

\begin{thm} \label{thm:Xan-11}
Let $n$ be a natural number and $a$ be a Heegner number such that $a\geq 11$.
\begin{enumerate}[label=\textup{(\alph*)}]
\item \label{item:a11-n2} If $n$ is even, then $\displaystyle |X(n,a)| = |Y(n,a)|$.
\item \label{item:a11-n1} If $n$ is odd, then
\begin{equation} \label{eqn:Xan-11-nodd}
|X(n,a)| = \frac13
\left[1+2\cdot\frac{(\tau(n_q)|3)}{\tau(n_q)}\right]\cdot|Y(n,a)|,
\end{equation}
where $n_q$ denotes the product of all prime factors (including multiplicity) of $n$ which are quadratic residues modulo $a$ but not expressible as $x^2+ay^2$ for any integers $x,y$.
\end{enumerate}
\end{thm}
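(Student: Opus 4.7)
For part (a), the plan is to apply Lemma \ref{lem:x2ay2even-a3mod8} after observing that each Heegner number $a \geq 11$, namely $a \in \{11, 19, 43, 67, 163\}$, satisfies $a \equiv 3 \pmod{8}$. The substance is in part (b), for which I would determine precisely which elements of $Y(n, a)$ lie in the image of $\ph_{n,a}$ and then count them via unique factorization in $\qZ{a}$.

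First I would note that $\qZ{a}/(2) \cong \bF_4$ has nonzero residue classes $1$, $\lambda_a$ and $\lambda_a^2 \equiv 1 + \lambda_a \pmod 2$. An element $z = b + c\lambda_a \in Y(n,a)$ lies in $\im(\ph_{n,a})$ exactly when $c$ is even, i.e.\ when $z \equiv 0$ or $1 \pmod 2$. A short computation of $N(b + c \lambda_a) \pmod 2$, using that $(a+1)/4$ is odd because $a \equiv 3 \pmod 8$, shows that no $z \in Y(n,a)$ with odd $n$ is divisible by $2$. Thus $|X(n,a)|$ equals the number of $z \in Y(n,a)$ with $z \equiv 1 \pmod 2$.

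Next I would parametrize $Y(n,a)$ by prime factorizations in $\qZ{a}$: up to the units $\pm 1$, each $z$ is determined by choosing an exponent $f_p \in \{0, 1, \ldots, e_p\}$ for every split prime $p \mid n$ with $p\qZ{a} = \fp\bar\fp$ (letting $z$ contain $\fp^{f_p} \bar\fp^{e_p - f_p}$), while the ramified and inert contributions are forced (if some inert prime has odd exponent, $Y(n,a)$ is empty and the formula holds trivially). Combining the lemma that precedes Lemma \ref{lem:x2ay2even-a3mod8} with the observations $\sqrt{-a} = 1 + 2\lambda_a \equiv 1 \pmod 2$ for the ramified prime over $a$ and $p \equiv 1 \pmod 2$ for any odd inert prime, I would verify that only the split primes $p \mid n_q$ contribute nontrivially to the class of $z$ modulo $2$: for such $p$ the two primes above are congruent to $\lambda_a$ and $\lambda_a^2$ respectively, so $z \equiv \lambda_a^{E(z)} \pmod 2$ with $E(z) = \sum_{p \mid n_q}(2 e_p - f_p)$.

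The final step is a roots-of-unity filter at $\omega = e^{2\pi i/3}$ applied to the condition $E(z) \equiv 0 \pmod 3$. The essential local identity, to be verified by a short case analysis on $e_p + 1 \pmod 3$, is
\[
\sum_{f=0}^{e_p} \omega^{j(2e_p - f)} = \left(\frac{e_p + 1}{3}\right) \qquad \text{for } j \in \{1, 2\}.
\]
Multiplying over $p \mid n_q$ yields $(\tau(n_q)|3)$ by multiplicativity of the Kronecker symbol, while the $j = 0$ factor contributes $\tau(n_q)$, so the number of admissible tuples $(f_p)$ is $\tfrac{1}{3}[\tau(n_q) + 2(\tau(n_q)|3)]$. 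Multiplying by the free factor $\tau(n_s) = \prod_{p \mid n_s}(e_p + 1)$ from the split primes expressible as $x^2 + ay^2$ and by $2$ for the units, and comparing with $|Y(n,a)| = 2\tau(n_s)\tau(n_q)$ coming from Proposition \ref{prop:count-given-norm}, gives exactly \eqref{eqn:Xan-11-nodd}. The main obstacle I anticipate is the bookkeeping of residue classes modulo $2$ across all splitting behaviours; once this is correctly set up, the roots-of-unity computation and the collapse to a single Kronecker symbol are short.
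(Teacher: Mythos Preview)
Your proposal is correct and follows essentially the same route as the paper: both reduce part (b) to counting the $z\in Y(n,a)$ with $z\equiv1\pmod 2$, classify the residues of the prime factors of $z$ in $\qZ{a}/(2)$ according to the three splitting types, and then isolate the congruence condition on the exponents at the ``bad'' split primes via a roots-of-unity filter at a primitive cube root. The only differences are cosmetic---the paper packages the filter through the auxiliary polynomial $F_{(v_1,\ldots,v_m)}(x)=\prod_jG_{v_j}(x)$ and its remainder modulo $x^3-1$, whereas you compute the character sums $\sum_{f=0}^{e_p}\omega^{j(2e_p-f)}$ directly; these are literally the same sums, since $\sum_{f=0}^{e_p}\omega^{2e_p-f}=G_{e_p}(\omega)$.
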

Note that the results from \cite[Cor.\,1 and 3]{Kaplan2004OnTN} are contained in \ref{item:a11-n2}.

\begin{ex}
Let $a=11$ and $n=437805=3^4\cdot5\cdot23\cdot47$. We see that $47=6^2+11\cdot1^2$ and $3,5,23$ are the prime factors of $n$ which are quadratic residues modulo $11$ but not expressible as $x^2+11y^2$. This implies that $|Y(n,a)|=2\tau(n)=80$ and $n_q=3^4\cdot5\cdot23$, i.e.~$\tau(n_q)=20$. Therefore,
\[
|X(n,a)| = \frac{1}{3}\left[1+2\cdot\frac{(20|3)}{20}\right]\cdot80 = 24
\]
In fact, a computation shows that
\[
X(n,a) = \left\{\begin{array}{c} (\pm78,\pm609),(\pm114,\pm543),(\pm126,\pm513),\\(\pm166,\pm367),(\pm182,\pm271),(\pm198,\pm81)\end{array} \right\}.
\]
\end{ex}

\begin{proof}[Proof of Theorem \ref{thm:Xan-11}]
The case $n$ is even follows from Lemma \ref{lem:x2ay2even-a3mod8}. Now assume that $n$ is odd. Following the proof of loc.~cit., determining $|X(n,a)|$ amounts to counting the number of elements of $Y(n,a)$ of the form $x+y\sqrt{-a}=(x-y)+2y\lambda_a$ for some $x,y\in\bZ$, or equivalently, those congruent to $1$ modulo $2\qZ{a}$. To this end, fix a set $\dP$ of representatives of the association classes of prime elements of $\qZ{a}$ which is stable under conjugation. For each $z\in Y(n,a)$, consider its prime factorization (note that $a$ is necessarily a prime integer and is ramified in $\bQ(\sqrt{-a})/\bQ$):
\begin{equation} \label{eqn:factor-in-qZa}
    z = \ep(\sqrt{-a})^{\delta}\prod_{i=1}^l \Bigl(\pi_{i}^{\alpha_i}\overline{\pi}_{i}^{\alpha'_i}\Bigr) \prod_{j=1}^m \Bigl(\rho_{j}^{\beta_j}\overline{\rho}_{j}^{\beta'_j}\Bigr) \prod_{k=1}^sr_k^{\gamma_k},
\end{equation}
where $\ep\in\qZ{a}^\times$; $(\pi_i,\bar{\pi}_i)$ are  conjugate pairs of primes in $\dP$ dividing prime numbers $p_i$ which can be written as $x^2+ay^2$; $(\rho_j,\overline{\rho}_j)$ are conjugate pairs of primes in $\dP$ dividing prime numbers $q_j$ which are quadratic residues modulo $a$ but not expressible as $x^2+ay^2$ such that $\rho_j\equiv\lambda_a^2$ and hence $\overline{\rho}_j\equiv\lambda_a\bmod{2}$; and $r_k$ are prime numbers that are not quadratic residues modulo $a$. Taking norm and comparing this with the prime factorization of $n$ yields
\[
\delta = v_a(n), \quad \alpha_i+\alpha_i'=v_{p_i}(n), \quad \beta_j+\beta_j'=v_{q_j}(n) \quad \text{and} \quad 2\gamma_k=v_{r_k}(n),
\]
where $v_p(n)$ denotes the $p$-adic valuation of $n$, i.e.~the exponent of the highest power of $p$ that divides $n$. This means that the values of $\delta$ and $\gamma_k$'s are fixed and $0\leq\alpha_i,\alpha_i'\leq v_{p_i}(n)$ for all $i$ and $0\leq\beta_j,\beta_j'\leq v_{q_j}(n)$ for all $j$. Furthermore, reducing \eqref{eqn:factor-in-qZa} modulo $2$ yields
\[
1\equiv z \equiv \prod_j \lambda_a^{2\beta_j+\beta_j'} = \prod_j \lambda_a^{2\beta_j+v_{q_j}(n)-\beta_j} = \lambda_a^{\sum_{j}(v_{q_j}(n)+\beta_j)} \pmod{2}.
\]
This means that, provided that $2\mid v_r(n)$ for all primes $r$ which remain prime in $\qZ{a}$, we have
\begin{equation} \label{eqn:Xan-11-nodd-pre}
|X(n,a)| = 2\prod_{i=1}^l\bigl(v_{p_i}(n)+1\bigr) \cdot \bigl|T(v_{q_1}(n),\ldots,v_{q_m}(n))\bigr|
\end{equation}
where
\begin{equation} \label{eqn:T-v-v}
T(v_1,\ldots,v_m) := \Biggl\{ (b_1,\ldots,b_m)\in\bN_0^m \ \bigg| \ b_j\leq v_j \ \text{and} \ \sum_{j=1}^m (v_j+b_j) \equiv 0 \bmod{3} \Biggr\}.
\end{equation}
To determine the cardinality of $T(v_1,\ldots,v_m)$, observe that it is equal to the constant term of the remainder in the polynomial division of
\[
F_{(v_1,\ldots,v_m)}(x) := \prod_{j=1}^m G_{v_j}(x)
\]
by $x^3-1$, where $G_v(x):=x^v+x^{v+1}+\cdots+x^{2v}$. To compute this, write
\begin{equation} \label{eqn:mod-x3-1}
    F_{(v_1,\ldots,v_m)}(x) = Q(x)(x^3-1)+(A+Bx+Cx^2),
\end{equation}
where $Q(x)\in\bZ[x]$ and $A,B,C\in\bZ$. Denote by $\omega\in\bC$ a primitive third root of unity. Evaluating \eqref{eqn:mod-x3-1} in $x$ at $1,\omega,\omega^2$ and summing all three obtained equations yields
\[
3A = F_{(v_1,\ldots,v_m)}(1) + F_{(v_1,\ldots,v_m)}(\omega) + F_{(v_1,\ldots,v_m)}(\omega^2).
\]
On the other hand, we have $G_v(1)=v+1$ and $G_v(\omega)=G_v(\omega)=r(v)\in\{-1,0,1\}$ such that $v+1\equiv r(v)\pmod{3}$. Note that $r(v)$ is exactly the Legendre symbol of $v+1$ over $3$. This implies that
\begin{equation} \label{eqn:T-cardinality}
    |T(v_1,\ldots,v_m)| = A = \frac{1}{3}\left[V+2\left(\frac{V}{3}\right)\right], \quad \text{where} \ V:=\prod_{j=1}^m(v_j+1).
\end{equation}
Hence \eqref{eqn:Xan-11-nodd} follows from \eqref{eqn:Xan-11-nodd-pre} and \eqref{eqn:T-cardinality} in combination with the observation that if $2\mid v_r(n)$ for all primes $r$ which remain prime in $\qZ{a}$, then
\[
Y(n,a) = 2\prod_{i=1}^l\bigl(v_{p_i}(n)+1\bigr)\prod_{j=1}^m\bigl(v_{q_j}(n)+1\bigr),
\]
which can be deduced from Proposition \ref{prop:count-given-norm}.
\end{proof}

\section{Applications of Eisenstein integers and the Cubic Reciprocity Law} \label{sec:eisenstein}
The special feature of the case $a=3$ is that $\qZ{3}^\times$ is exactly the group of the sixth roots of unity, whereas $\qZ{a}$ for $a>3$ consists of only $1$ and $-1$. Hence the discussion needs to be done separately. In what follows, we will write $\omega:=\lambda_3\in\qZ{3}$. This is a primitive third root of unity and satisfies $1+\omega+\omega^2=0$.

\begin{thm} \label{thm:Xan-3}
Let $n$ be a natural number.
\begin{enumerate}[label=\textup{(\alph*)}]
\item If $n$ is even, then $\displaystyle |X(n,3)| = 6\displaystyle\sum_{c|n} \left ( \frac{c}{3} \right )$.
\item If $n$ is odd, then $\displaystyle |X(n,3)| = 2\displaystyle\sum_{c|n} \left ( \frac{c}{3} \right )$.
\end{enumerate}
\end{thm}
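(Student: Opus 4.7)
For part (a), since $3 \equiv 3 \pmod{8}$, Lemma \ref{lem:x2ay2even-a3mod8} directly gives $|X(n,3)| = |Y(n,3)|$, and formula \eqref{eqn:Yan-3mod4} of Proposition \ref{prop:count-given-norm} yields $|Y(n,3)| = 6\sum_{c\mid n}(c/3)$ since $|\qZ{3}^\times|=6$.

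For part (b), the map $\varphi_{n,3}: X(n,3) \to Y(n,3)$ is injective, so it suffices to determine the cardinality of its image. Using the identity $\sqrt{-3} = 1+2\omega$, an element $z\in Y(n,3)$ expressed on the $\bZ$-basis $\{1,\omega\}$ of $\qZ{3}$ lies in $\im\varphi_{n,3}$ precisely when its $\omega$-coefficient is even, equivalently when $z \equiv 0$ or $1 \pmod{2\qZ{3}}$. As $n$ is odd and $2$ remains prime in $\qZ{3}$, the case $z \equiv 0 \pmod{2\qZ{3}}$ would force $4 \mid n$ and is excluded; hence $\im\varphi_{n,3} = \{z \in Y(n,3) : z \equiv 1 \pmod{2\qZ{3}}\}$.

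To count these, I would exploit the free action of $\qZ{3}^\times$ on $Y(n,3)$, whose orbits all have size $6$. Reduction modulo $2\qZ{3}$ identifies $\qZ{3}/2\qZ{3}$ with the field $\bF_4$, and the induced group homomorphism $\qZ{3}^\times \to \bF_4^\times \cong \bZ/3\bZ$ is surjective with kernel $\{\pm 1\}$. Each $\qZ{3}^\times$-orbit in $Y(n,3)$, whose elements all reduce to units of $\bF_4$ because $n$ is odd, is therefore distributed evenly over the three non-zero classes, with exactly two elements per class. Consequently $|\im\varphi_{n,3}| = \tfrac{2}{6}|Y(n,3)| = \tfrac{1}{3}|Y(n,3)|$, and combining with Proposition \ref{prop:count-given-norm} gives $|X(n,3)| = 2\sum_{c\mid n}(c/3)$.

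The only substantive step is the orbit count in the odd case; compared with Theorem \ref{thm:Xan-11}, the argument is in fact simpler, since the larger unit group $\qZ{3}^\times$ on its own surjects onto $\bF_4^\times$, so no cubic-residue condition on the prime factors of $n$ enters the formula.
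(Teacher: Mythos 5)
Your proposal is correct and follows essentially the same route as the paper: part (a) is the paper's bijectivity argument for $\ph_{n,3}$ (packaged via Lemma \ref{lem:x2ay2even-a3mod8}, which indeed applies to $a=3$), and part (b) is the same orbit count modulo $2\qZ{3}$, merely phrased with the full unit group acting in orbits of size $6$ (two elements per class of $\bF_4^\times$) instead of the paper's size-$3$ orbits under $\{1,\omega,\omega^2\}$ with exactly one element congruent to $1$.
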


\begin{proof}
We first treat the case $n$ is even and claim that the map $\ph_{n,3}$ from \eqref{eqn:ph-a-n} is bijective. To see the surjectivity, observe that for all $z\in Y(n,a)$, we have $2\mid N(z)=z\overline{z}$. Since $2$ remains prime in $\qZ3$ and $2\mid z$ if and only if $2\mid \overline{z}$, it follows that $2\mid z$, i.e.~$z=2a+2b\omega=\ph_{n,3}(2a-b,b)$ for some $a,b\in\bZ$. Combining this result with Proposition \ref{prop:count-given-norm} yields
\[
|X(n,3)| = |Y(n,3)| = 6\displaystyle\sum_{c|n} \left ( \frac{c}{3} \right )
\]

We now come to the case $n$ is odd. The set $Y(n,a)$ may be partitioned into subsets of the form $\{z,z\omega,z\omega^2\}$, i.e.~orbits under the action of $\{1,\omega,\omega^2\}$ given by the multiplication. Now $\{1,\omega,\omega^2\}$ is a reduced residue system in $\qZ{3}$ modulo $2$. Hence for each $z\in Y(n,a)$, the $z,z\omega,z\omega^2$ are all different modulo $2$ since $\gcd(2,z)=1$. Hence exactly one of them is congruent to $1$, i.e.~of the form $a+2b\omega=\ph(a-b,b)$ for some $a,b\in\bZ$. Combining this result with Proposition \ref{prop:count-given-norm} yields
\[
|X(n,3)| = \frac13|Y(n,3)| = 2\displaystyle\sum_{c|n} \left ( \frac{c}{3} \right ) \qedhere
\]
\end{proof}

We conclude with the case $a=27$, in which also the Cubic Reciprocity Law is involved. To this end, observe first that \cite[Prop.\,9.3.5]{ireland-rosen} each association class of prime elements of $\qZ{3}$ not dividing $3$ contains exactly one \textbf{primary prime}, by which we mean a prime element $\pi\in\qZ{3}$ congruent to $2$ modulo $3$, i.e.~$\pi=a+b\omega$ for some integers $a,b$ such that $a\equiv2$ and $b\equiv0\bmod3$.

\begin{thm}
Let $p$ be a prime number congruent to $1$ modulo $3$. The following are equivalent:
\begin{enumerate}
    \item There are integers $x,y$ such that $p=x^2+27y^2$.
    \item $2$ is a cubic residue modulo $p$.
    \item If $\pi\in\qZ{3}$ is a primary prime dividing $p$, then $\pi\equiv1\pmod{2}$.
\end{enumerate}
\end{thm}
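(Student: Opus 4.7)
My plan is to prove the chain of equivalences $(1) \Leftrightarrow (3) \Leftrightarrow (2)$. The first equivalence is a direct computation in $\qZ{3}$, while the second invokes the Cubic Reciprocity Law together with the fact that $2$ is itself primary.

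For $(1) \Rightarrow (3)$, I translate a representation $p = x^2 + 27y^2$ into the factorization $p = (x + 3y\sqrt{-3})(x - 3y\sqrt{-3})$ in $\qZ{3}$. Using $\sqrt{-3} = 1 + 2\omega$, the first factor rewrites as $\pi_0 := (x+3y) + 6y\omega$, which has norm $p$ and is therefore prime. Since $p$ is an odd prime, a parity check on $x^2 + 27y^2$ rules out both $x, y$ even (else $p$ is even) and both $x, y$ odd (else $4 \mid p$), so exactly one of $x, y$ is odd and hence $\pi_0 \equiv x + y \equiv 1 \pmod{2}$. Reducing modulo $3$ gives $\pi_0 \equiv x \pmod{3}$, and from $p \equiv x^2 \equiv 1 \pmod{3}$ we have $x \equiv \pm 1 \pmod{3}$. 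Replacing $\pi_0$ by $-\pi_0$ if necessary (which preserves the residue class modulo $2$) produces a primary prime divisor of $p$ congruent to $1$ modulo $2$.

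For $(3) \Rightarrow (1)$, I write a primary prime divisor $\pi = a + b\omega$ of $p$, so $a \equiv 2 \pmod 3$ and $b \equiv 0 \pmod 3$. The hypothesis $\pi \equiv 1 \pmod 2$ forces $a$ odd and $b$ even, which together with $3 \mid b$ yields $6 \mid b$. Writing $b = 6y$, the identity $p = N(\pi) = a^2 - ab + b^2$ becomes $p = a^2 - 6ay + 36y^2 = (a - 3y)^2 + 27y^2$, giving the desired representation.

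For $(2) \Leftrightarrow (3)$, I apply cubic reciprocity. Let $\chi_\pi : (\qZ{3}/\pi)^\times \to \{1, \omega, \omega^2\}$ be the cubic residue character defined by $\chi_\pi(\alpha) \equiv \alpha^{(N(\pi) - 1)/3} \pmod \pi$. Since $\qZ{3}/(\pi) \cong \bZ/p\bZ$, statement (2) is equivalent to $\chi_\pi(2) = 1$. The rational prime $2$ is itself primary (as $2 \equiv 2 \pmod 3$) and remains prime in $\qZ{3}$ with norm $4$, so the Cubic Reciprocity Law from \cite[Ch.~9]{ireland-rosen} applies and gives $\chi_\pi(2) = \chi_2(\pi)$. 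By definition, $\chi_2(\pi) \equiv \pi^{(4 - 1)/3} = \pi \pmod 2$, and since the residue field $\qZ{3}/(2) \cong \bF_4$ has multiplicative group $\{1, \omega, \omega^2\}$, the equality $\chi_2(\pi) = 1$ is literally $\pi \equiv 1 \pmod 2$. The main obstacle is the bookkeeping for the cubic reciprocity step, ensuring that both primes are primary and that no supplementary factors intervene; once this is confirmed, the three-term chain of equivalences closes.
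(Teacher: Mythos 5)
Your proof is correct, and it is worth noting that the paper itself offers no argument here at all --- it simply cites \cite[Prop.\,9.6.1--2]{ireland-rosen} --- so what you have written is essentially a self-contained reconstruction of the standard argument from that reference: the norm-form computation for $(1)\Leftrightarrow(3)$ and cubic reciprocity with the primary prime $2$ for $(2)\Leftrightarrow(3)$. All the individual steps check out (the parity analysis of $x^2+27y^2$, the identity $N(a+6y\omega)=(a-3y)^2+27y^2$, and the applicability of reciprocity since $2\equiv2\pmod3$ is primary of norm $4\neq p$). The only point you leave implicit is that statement (3) quantifies over \emph{all} primary primes dividing $p$, of which there are two, namely the primary associates of $\pi$ and $\overline{\pi}$; since conjugation is a ring automorphism fixing $2$ and $1$, the congruence $\pi\equiv1\pmod2$ holds for one exactly when it holds for the other, so your argument for a single primary divisor does suffice --- but you should say so.
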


\begin{proof}
\cite[Prop.\,9.6.1-2]{ireland-rosen}
\end{proof}

\begin{thm} \label{thm:Xan-27}
Let $n$ be a natural number.
\begin{enumerate}[label=\textup{(\alph*)}]
\item \label{item:a27-n3} If $3 \mid n$, then $X(a,27)\neq \emptyset$ only if $9 \mid n$. In this case, we have 
\[
|X(n,27)| = |X(\tfrac{n}{9},3)|.
\]
\item \label{item:a27-n2} If $3\nmid n$ but $2\mid n$, then
\[
|X(n,27)| = \frac13|X(n,3)|.
\]
\item \label{item:a27-n61} If $\gcd(6,n)=1$, then
\[
|X(n,27)| = \frac13\left[1+2\cdot\frac{(\tau(n_q)|3)}{\tau(n_q)}\right]\cdot|X(n,3)|,
\]
where $n_q$ denotes the product of all prime factors $q$ (including multiplicity) of $n$ such that $q\equiv1\pmod{3}$ and $2$ is not a cubic residue modulo $q$.
\end{enumerate}
\end{thm}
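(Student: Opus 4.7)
The plan is to analyze the injective map $\ph_{n,27}:X(n,27)\hookrightarrow Y(n,3)$, $(x,y)\mapsto(x+3y)+6y\omega$, whose image consists of those $z=c+d\omega\in Y(n,3)$ with $6\mid d$, equivalently $z\in\bZ+6\qZ{3}$. By the Chinese Remainder Theorem applied to $\qZ{3}/6\cong\qZ{3}/2\times\qZ{3}/3$, this condition is equivalent to the conjunction of $z\bmod 2\in\bF_2\subset\qZ{3}/2$ and $z\bmod 3\in\bF_3\subset\qZ{3}/3$. I would handle the three cases by analyzing each of these congruences.

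Cases \ref{item:a27-n3} and \ref{item:a27-n2} are elementary. For \ref{item:a27-n3}: if $3\mid n$ and $(x,y)\in X(n,27)$, then $3\mid x^2$, so $3\mid x$; writing $x=3x'$ gives $n=9(x'^2+3y^2)$, forcing $9\mid n$, and $(x,y)\mapsto(x/3,y)$ yields a bijection $X(n,27)\to X(n/9,3)$. For \ref{item:a27-n2}: since $2$ is inert in $\qZ{3}$ and $2\mid n$, every $z\in Y(n,3)$ is divisible by $2$, so the mod-$2$ condition is automatic. It remains to count $z\in Y(n,3)$ with $z\bmod 3\in\bF_3$. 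One checks that reduction $\qZ{3}^\times\to(\qZ{3}/3)^\times$ is an isomorphism of groups of order $6$ in which $\bF_3^*$ is a subgroup of index $3$; hence in each $\qZ{3}^\times$-orbit of $Y(n,3)$ (of size $6$, since orbits are free), exactly $2$ elements reduce into $\bF_3^*$, giving $|X(n,27)|=\tfrac13|Y(n,3)|=\tfrac13|X(n,3)|$ by Theorem \ref{thm:Xan-3}(a).

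Case \ref{item:a27-n61} is the technical heart, and I would adapt the prime-factorization bookkeeping from the proof of Theorem \ref{thm:Xan-11}, now using primary primes in $\qZ{3}$ and cubic reciprocity. Since $\gcd(6,n)=1$, each $z\in Y(n,3)$ has a unique factorization
\[
z=\epsilon\prod_i\pi_i^{\alpha_i}\overline{\pi}_i^{\alpha_i'}\prod_j\rho_j^{\beta_j}\overline{\rho}_j^{\beta_j'}\prod_k r_k^{\gamma_k},
\]
with $\epsilon\in\qZ{3}^\times$ and all prime factors primary, where $\pi_i$ lies above a split prime $p_i=x^2+27y^2$, $\rho_j$ lies above a split prime $q_j\equiv 1\bmod 3$ with $2$ not a cubic residue mod $q_j$, and $r_k$ is inert (so $r_k\equiv 2\bmod 3$). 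By the theorem preceding Theorem \ref{thm:Xan-27}, $\pi_i\equiv 1\pmod 2$ whereas $\rho_j\not\equiv 1\pmod 2$; combined with $\rho_j\overline{\rho}_j=q_j$ being odd, this forces $\{\rho_j,\overline{\rho}_j\}\equiv\{\omega,\omega^2\}\pmod 2$, and I fix the labeling so that $\rho_j\equiv\omega^2\pmod 2$ for every $j$. Reducing the factorization modulo $2$ and modulo $3$ then yields
\[
z\equiv\bar\epsilon\cdot\omega^{S_2}\pmod 2\qquad\text{and}\qquad z\equiv\epsilon\cdot 2^E\pmod 3,
\]
where $S_2=\sum_j(2\beta_j+\beta_j')\equiv\sum_j(v_{q_j}(n)+\beta_j)\pmod 3$ and $E\in\bZ$ (since every primary prime, as well as every inert $r_k$, reduces to $2\bmod 3$). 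The first condition determines $\bar\epsilon\bmod 2$ and so selects $\epsilon$ up to sign, while the second forces $\epsilon\in\{\pm 1\}$; these two constraints on $\epsilon$ are simultaneously satisfiable iff $\omega^{-S_2}\equiv 1\pmod 2$, i.e.~$S_2\equiv 0\pmod 3$, in which case exactly two units qualify. Summing over valid exponent tuples yields
\[
|X(n,27)|=2\prod_i(v_{p_i}(n)+1)\cdot\bigl|T(v_{q_1}(n),\ldots,v_{q_m}(n))\bigr|,
\]
with $T$ as in \eqref{eqn:T-v-v}. Combining this with $|X(n,3)|=2\prod_i(v_{p_i}(n)+1)\prod_j(v_{q_j}(n)+1)=2\tau(n_q)\prod_i(v_{p_i}(n)+1)$ (deduced from Theorem \ref{thm:Xan-3}(b) via multiplicativity of $c\mapsto(c|3)$) and the identity $|T|=\tfrac13[V+2(V|3)]$ from \eqref{eqn:T-cardinality} with $V=\tau(n_q)$ produces the claimed ratio. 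The main technical obstacle is the mod-$2$ analysis of the primary $\rho_j$'s, which rests on the cubic-reciprocity-based characterization of $x^2+27y^2$ recalled just above, and the verification that the mod-$2$ and mod-$3$ constraints on $\epsilon$ interact precisely through $S_2\bmod 3$.
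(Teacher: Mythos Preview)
Your argument is correct and follows essentially the same route as the paper: the map $\ph_{n,27}$ you write down coincides with the paper's map $\psi$ (via $\sqrt{-3}=1+2\omega$), part \ref{item:a27-n2} is handled by the same orbit count (you phrase it through the isomorphism $\qZ{3}^\times\cong(\qZ{3}/3)^\times$, the paper by listing the residues $\pm2,\pm2\omega,\pm2\omega^2$ modulo~$6$), and part \ref{item:a27-n61} reproduces the paper's primary-prime bookkeeping and the counting set $T$ verbatim. One cosmetic slip: in your mod-$2$ congruence the symbol $\bar\epsilon$ should simply be $\epsilon$ (reduction, not conjugation), but this does not affect the subsequent reasoning.
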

Note that this result agrees with \cite[Thm.\,6.1]{berkovich-ramanujan}.
\begin{proof}
We begin with the case $3\mid n$. Observe that if $(x,y)\in X(27,n)$, then $3$ divides $n-27y^2=x^2$. Hence $x=3v$ for some $v\in\bZ$. In particular, if $X(27,n)$ contains an element $(x,y)=(3v,y)$, then $n=9v^2+27y^2$, i.e.~$9\mid n$ and $v^2+3y^2=\frac{n}{9}$. This yields a bijection between $X(n,27)$ and $X(\frac{n}{9},3)$, which proves \ref{item:a27-n3}.

In order to treat the case $3\nmid n$, observe first that the map
\begin{equation} \label{eqn:psi-27-3}
\psi:X(n,27)\to Y(n,3), \ (x,y)\mapsto x+3y\sqrt{-3}.
\end{equation}
is well-defined and injective. Furthermore, $z+b\omega\in Y(n,3)$ lies in the image of $\psi$ if and only if $6\mid b$, i.e.~$z\equiv\pm1,\pm2\bmod{6}$

For the case $2\mid n$, i.e.~$\gcd(6,n)=2$, we claim that for each $z\in Y(n,3)$, exactly one of $z,z\omega,z\omega^2$ is in the image of $\psi$. In fact, the condition $3\nmid n$ and $2\mid n$ implies that $(1-\omega)\nmid z$ and $2\mid z$. Consequently, $\gcd(z,6)=\gcd(z,2(1-\omega)^2)=2$. Hence $z$ is congruent to exactly one of the following elements modulo $6$:
\[
\pm2,\pm2\omega,\pm2\omega^2=\mp2\mp2\omega.
\]
This implies that exactly one of $z,z\omega,z\omega^2$ is of the form $a+b\omega$ for some $a,b\in\bZ$ such that $a\equiv\pm2\pmod{6}$ and $6\mid b$. Therefore $|Y(n,3)|=3|X(n,27)|$, which proves \ref{item:a27-n2} in combination with Theorem \ref{thm:Xan-3}.

We now come to the case $\gcd(6,n)=1$. For each $z\in Y(n,3)$, consider its prime factorization
\begin{equation} \label{eqn:factor-eisenstein}
    z = \ep\prod_{i=1}^l \Bigl(\pi_{i}^{\alpha_i}\overline{\pi}_{i}^{\alpha'_i}\Bigr) \prod_{j=1}^m \Bigl(\rho_{j}^{\beta_j}\overline{\rho}_{j}^{\beta'_j}\Bigr) \prod_{k=1}^sr_k^{\gamma_k},
\end{equation}
where $\ep\in\qZ{3}^\times=\{\pm1,\pm\omega,\pm\omega^2\}$; $(\pi_i,\bar{\pi}_i)$ are  conjugate pairs of primary primes dividing prime numbers $p_i$ such that $\pi_i\equiv1\bmod2$; $(\rho_j,\overline{\rho}_j)$ are conjugate pairs of primary primes dividing prime numbers $q_j$ such that $\rho_j\equiv\omega^2$ and hence $\overline{\rho}_j\equiv\omega\bmod{2}$; and $r_k$ are prime numbers that are not quadratic residues modulo $3$. Taking norm and comparing this with the prime factorization of $n$ yields
\[
\alpha_i+\alpha_i'=v_{p_i}(n), \quad \beta_j+\beta_j'=v_{q_j}(n) \quad \text{and} \quad 2\gamma_k=v_{r_k}(n),
\]
i.e.~the values of $\gamma_k$'s are fixed and $0\leq\alpha_i,\alpha_i'\leq v_{p_i}(n)$ for all $i$ and $0\leq\beta_j,\beta_j'\leq v_{q_j}(n)$ for all $j$. Furthermore, reducing \eqref{eqn:factor-eisenstein} modulo $3$ yields
\[
\pm1\equiv z \equiv \ep(-1)^{\sum_i(\alpha_i+\alpha'_i)+\sum_j(\beta_j+\beta_j')+\sum_k\gamma_k} \pmod{3},
\]
which implies that $\ep=\pm1$. Now reducing \eqref{eqn:factor-eisenstein} modulo $2$ yields
\[
1\equiv z \equiv \prod_j \lambda_a^{2\beta_j+\beta_j'} = \prod_j \lambda_a^{2\beta_j+v_{q_j}(n)-\beta_j} = \lambda_a^{\sum_{j}(v_{q_j}(n)+\beta_j)} \pmod{2}.
\]
This means that, provided that $2\mid v_r(n)$ for all primes $r$ which remain prime in $\qZ{3}$, we have
\begin{equation} \label{eqn:Xan-27-n6-pre}
|X(n,a)| = 2\prod_{i=1}^l\bigl(v_{p_i}(n)+1\bigr) \cdot \bigl|T(v_{q_1}(n),\ldots,v_{q_m}(n))\bigr|
\end{equation}
where $T(v_{q_1}(n),\ldots,v_{q_m}(n))$ is as defined in \eqref{eqn:T-v-v} in the proof of Theorem \ref{thm:Xan-11}. Hence a similar argument from the proof of loc.\,cit.~applies here, which proves \ref{item:a27-n61}.
\end{proof}

\begin{rmk}
Contrary to this case, a criterion for a prime integer $p$ to be of the form $x^2+ay^2$ in an explicit form depending on $a$ is not known to the authors. It is only guaranteed by \cite[Thm.9.2]{cox-primes} that such a polynomial criterion exists. Also the case $a=11$ has been discussed in \cite{primex211y2} with an explicit polynomial, but it is unlikely to extend this result to a general case, even for Heegner numbers.
\end{rmk}

\end{document}